\theoremstyle{plain}
\newtheorem{thm}{Theorem}[section]
\newtheorem{prop}{Proposition}[section]
\theoremstyle{definition}
\newtheorem{df}{Definition}[section]
\newtheorem{rem}{Remark}[section]
\newcommand{\Z}{\mathbb{Z}}
\newcommand{\C}{\mathbb{C}}
\newcommand{\ZZ}{\mathbb{Z}}
\newcommand{\RR}{\mathbb{R}}
\newcommand{\CC}{\mathbb{C}}
\newcommand{\1}{{\bf 1}}
\def\h{{\mathfrak{h}}}
\def\hh{\hat{\mathfrak{h}}}
\newcommand{\Aut}{{\rm Aut}}
\newcommand{\w}{\omega}
\newcommand{\ot}{\otimes}
\newcommand{\op}{\oplus}
\newcommand{\eqa}{\begin{eqnarray}}
\newcommand{\eeqa}{\end{eqnarray}}
\newcommand{\eqn}{\begin{eqnarray*}}
\newcommand{\eeqn}{\end{eqnarray*}}
\newcommand{\tr}{{\rm tr}}
\newcommand{\ra}{\rangle}
\newcommand{\la}{\langle}
\begin{document}

\title{Conformal designs and 
D.H. Lehmer's conjecture\footnote{This work was supported by JSPS KAKENHI.}}

\author{
Tsuyoshi Miezaki\thanks{Systems Science and Information Studies, 
Faculty of Education, Art and Science, 
Yamagata University, 
1-4-12 Kojirakawa, Yamagata 990-8560, Japan. email: miezaki@e.yamagata-u.ac.jp
}
}
\date{}
\maketitle

\begin{abstract}
In 1947, Lehmer conjectured that the Ramanujan $\tau$-function $\tau (m)$ is non-vanishing for all positive integers $m$, where $\tau (m)$ are the Fourier coefficients of the cusp form $\Delta$ of weight $12$. 
It is known 
that Lehmer's conjecture can be reformulated in terms of 
spherical $t$-design, by the 
result of Venkov. 
In this paper, we show that $\tau (m)=0$ is equivalent to 
the fact that the homogeneous space of the moonshine vertex operator algebra $(V^{\natural})_{m+1}$ is a conformal $12$-design. 
Therefore, Lehmer's conjecture is now reformulated in terms of 
conformal $t$-designs. 
\end{abstract}
\noindent
{\small\bfseries Key Words and Phrases.}
vertex operator algebras, conformal design.\\ \vspace{-0.15in}

\noindent
2000 {\it Mathematics Subject Classification}. 
Primary 17B69; Secondary 05E99; Tertiary 11F03.\\ \quad

\section{Introduction}
In \cite{H1}, H\"ohn defined the concept of conformal designs, 
which is an analogue of the concept of combinatorial designs and 
spherical designs. 
First, we review some information that will be needed later in the present paper. 
See \cite{Bo}, \cite{FHL}, and \cite{FLM} for the 
definitions and elementary information on vertex operator algebras 
and their modules. 

A vertex operator algebra (VOA) $V$ over the field $\CC$ of 
complex numbers is a complex vector space 
equipped with a linear map $Y : V \rightarrow {\rm End}(V)[[z, z^{-1}]]$ 
and two non-zero vectors $\1$ and $\omega$ in $V$ 
satisfying certain axioms (cf. \cite{{FHL},{FLM}}). 
We denote a VOA $V$ by $(V,Y,\1,\w)$. 
For $v \in V$, we write 
\[
Y(v,z) =\sum_{n\in\ZZ}v(n)z^{-n-1}. 
\]
In particular, for $\w \in V$, we write 
\[
Y(\w,z) =\sum_{n\in\ZZ}L(n)z^{-n-2}, 
\]
and $V$ is graded by $L(0)$-eigenvalues: 
%
$V=\oplus_{n\in \Z} V_{n}$. 
For $V_n$, $n$ is called the degree. 
In the present paper, 
we assume that 
$V_{n}=0$ for $n<0$, 
and $V_{0}=\C \bold{1}$. For $v \in V_{n}$, 
the operator $v(n-1)$ is homogeneous of degree $0$. 
We set $o(v) = v(n-1)$. 
We also assume that the VOAs $V$ 
are isomorphic to a direct sum of 
highest weight modules for the Virasoro algebra, 
i.e., 
\begin{align}\label{eqn:decom}
V=\bigoplus_{n\geq 0}V(n), 
\end{align}
where each $V(n)$ is a sum of the 
highest weight $V_\w$ modules of 
highest weight $k$ and $V(0)=V_\w$.


In particular, the decomposition (\ref{eqn:decom}) yields
the natural projection map 
\begin{align*}
\pi : V \rightarrow V_{\omega} 
\end{align*}
with the kernel $\oplus_{n>0}V(n)$. 
Next, we give the definition of a conformal $t$-design, which 
is based on Matsuo's paper \cite{Matsuo}. 
\begin{df}[cf.~\cite{H1}]\label{df:con}
Let $V$ be a VOA of central charge $c$, 
and let $X$ be a degree $h$ subspace of a module of $V$. 
For a positive integer $t$, $X$ is referred to as a conformal $t$-design 
if for all $v \in V_{n}$, where $0 \leq n \leq t$, we have
\begin{align*}
{\rm tr}\vert _{X} o(v) = {\rm tr}\vert _{X} o(\pi (v)). 
\end{align*}
\end{df}

Then, it is easy to prove the following theorem: 
\begin{thm}[cf.~{\cite[Theorem 2.3]{H1}}]\label{thm:2.3}
Let $X$ be the homogeneous subspace of a module of 
a VOA $V$.
Then, the following conditions are equivalent$:$
\begin{enumerate}
\item 
[{\rm (i)}] 

$X$ is a conformal $t$-design.

\item 
[{\rm (ii)}] 

For all homogeneous $v \in \ker \pi 
=\bigoplus_{n>0}V(n)$
of degree $n \leq t$, one has 
$\tr|_{X}o(v) = 0$. 
\end{enumerate}
\end{thm}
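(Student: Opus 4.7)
The plan is to unwind the definition of a conformal $t$-design and exploit the linearity of both the zero-mode map $v \mapsto o(v)$ and the trace on $X$. The key structural fact I will invoke is the direct sum decomposition
\[
V = V_{\omega} \oplus \ker \pi = \overline{M}(0) \oplus \bigoplus_{h>0}\overline{M}(h),
\]
coming from (\ref{Hohn2}), together with the observation that this decomposition is compatible with the $L(0)$-grading: each summand $\overline{M}(h)$ is itself graded, and an element $v \in V_n$ splits uniquely as $v = \pi(v) + v_{>0}$ with $\pi(v) \in V_{\omega} \cap V_n$ and $v_{>0} \in (\ker\pi) \cap V_n$.

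For the direction (ii) $\Rightarrow$ (i), I would take an arbitrary $v \in V_n$ with $0 \le n \le t$, write $v = \pi(v) + (v - \pi(v))$, apply linearity to get
\[
\tr|_X o(v) \;=\; \tr|_X o(\pi(v)) \;+\; \tr|_X o(v - \pi(v)),
\]
and note that the second term vanishes by (ii) because $v - \pi(v) \in \ker\pi$ is still homogeneous of degree $n \le t$. This yields the defining identity of Definition \ref{df:con}. For the converse (i) $\Rightarrow$ (ii), I would take a homogeneous $v \in \ker\pi$ of degree $n \le t$; then $\pi(v) = 0$, so $o(\pi(v)) = 0$, and the definition of conformal $t$-design applied to this $v$ reads $\tr|_X o(v) = \tr|_X o(\pi(v)) = 0$, which is exactly (ii).

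There is no real obstacle here: the argument is essentially formal, amounting to the observation that the condition in Definition \ref{df:con} is linear in $v$ and automatically satisfied on $V_{\omega}$ (where $\pi$ acts as the identity), so it reduces to a vanishing condition on the complement $\ker\pi$. The only minor point worth stating carefully is the grading compatibility of $\pi$, which ensures that restricting attention to degrees $n \le t$ on $V$ corresponds, under the decomposition, to restricting attention to degrees $n \le t$ on $\ker\pi$.
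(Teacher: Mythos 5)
Your argument is correct; note that the paper itself states this result without proof, quoting it from H\"ohn \cite[Theorem 2.3]{H1}, and your formal linearity argument---splitting $v=\pi(v)+(v-\pi(v))$ along the degree-compatible decomposition $V=V_{\omega}\oplus\ker\pi$ and using $\pi(v)=0$, hence $o(\pi(v))=0$, for the converse---is exactly the standard proof one would give. The one point you rightly single out, that each $\overline{M}(h)$ is $L(0)$-graded so that $\pi$ preserves degree and $v-\pi(v)$ remains homogeneous of degree $n\leq t$, is indeed the only non-vacuous step.
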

$V_m$ can be considered to have large symmetry if a homogeneous space of VOA $V_m$
is a conformal $t$-design for higher $t$ \cite{Matsuo}. 

We next give examples of conformal designs. 
Let $V^{\natural}$ be the moonshine VOA {\rm \cite{FLM}}. 
Then, it is well known that $(V^{\natural})_m$ is 
a conformal $11$-design for all $m$ \cite{H1}. 
Therefore, it is an interesting 
problem to prove or disprove the existence of a conformal $12$-design 
which is a homogeneous space of $V^{\natural}$. 

The main result in this paper is as follow: 
\begin{thm}\label{thm:main1}
Let the notation be the same as above. 
Let $\tau(m)$ be Ramanujan's $\tau$-function$:$ 
\begin{eqnarray}\label{eqn:delta}
\Delta(z)=\eta(z)^{24}=(q^{1/24}\prod_{m\geq 1}(1-q^{m}))^{24}
=\sum_{m\geq 1}\tau (m)q^{m}.
\end{eqnarray}
Then, the following are equivalent{\rm :} \begin{enumerate}
\item 
[\rm{(i)}]

$\tau (m)=0$. 
\item 
[\rm{(ii)}]

$(V^{\natural})_{m+1}$ is a conformal $12$-design.
\end{enumerate}
\end{thm}


Lehmer conjectured that $\tau (m) \neq 0$ \cite{Lehmer}. 
Thus, Theorem \ref{thm:main1} is 
a reformulation of Lehmer's conjecture. 

A homogeneous space of VOA $V_m$ 
has strength $t$ 
if $V_m$ is a conformal $t$-design 
but is not a conformal $(t+1)$-design. 
We have not yet been able determined 
the strength of $(V^{\natural})_m$ for general $m$, 
and so Lehmer's conjecture is still open. 
This demonstrates the difficulty of determining 
the strength of $V_m$.
However, we will give examples for which the strength $t$ can be determined: 
\begin{thm}\label{thm:main2}
The homogeneous spaces in a $d$-free boson VOA have 
strength $3$. 
\end{thm}

In Sections $2$ and $3$, respectively, Theorems 1.2 and 1.3 will be proved.

%

\begin{rem}
In the theory of spherical designs, analogues of Theorem \ref{thm:main1} have been obtained \cite{{Venkov},{Pache}}. 
Namely, $\tau (m)=0$ is equivalent to 
the shell of $E_8$-lattice $(E_8)_{2m}$ being a spherical $8$-design \cite{BM1} 
(see, e.g., \cite{BM1} for the undefined terms in this remark). 
\end{rem}

\section{The case of $V^{\natural}$}
\subsection{Graded trace}
In this section, we review the concept of a graded trace. 
Recall that $V$ is a VOA with standard $L(0)$-grading 
\[
V=\bigoplus_{n\geq 0}V_{n}. 
\]
Then, for $v\in V_k$, we define 
the graded trace $Z_{V}(v,z)$ as follows: 
\[
Z_{V}(v, z) = \tr\vert_{V} o(v)q^{L(0)-c/24} = 
q^{-c/24}\sum_{n=0}^{\infty}(\tr\vert_{V_{n}}o(v))q^{n}, 
\]
where $c$ is the central charge of $V$. 
If $v=\1$, then
\[
Z_{V}(\1,z) = \tr\vert_{V} q^{L(0)-c/24} 
= q^{-c/24}\sum_{n=0}^{\infty}(\dim V_{n})q^{n}. 
\]



For a VOA $V=(V,Y,\1,\w)$, 
Zhu defined the new VOA $(V, Y[\ ], \1,\w - c/24)$, 
where $c$ is the central charge of $V$ \cite{Zhu}. 
Let $\widetilde{\w}=\w - c/24$ and 
\[
Y[\widetilde{\w}, z] =
\sum_{n\in \ZZ}L[n]z^{-n-2}. 
\]
Then, we have $V=\bigoplus_{n=0}^{\infty}V_{[n]}$ and 
\begin{align}\label{eqn:Zhu}
\bigoplus_{n\leq N}V_{n}
=\bigoplus_{n\leq N}V_{[n]}. 
\end{align}

\subsection{Proof of Theorem \ref{thm:main1}}
Note that the moonshine VOA is an
extremal self-dual VOA in the sense of \cite{H1}, 
namely, $(V^{\natural})_{1}=0$ 
and 
for $v\in (V^{\natural})_{[12]}$, 
we have 
$Z_{V^{\natural}}(v,z)$ is a cusp form of weight $12$ 
with respect to $SL_2(\ZZ)$ (cf.~\cite[page 299, line 11 up]{Zhu}, 
\cite{DLM}). 
It is well known that $\Delta (z)$ is the unique cusp form of weight $12$ 
with respect to $SL_{2}(\ZZ)$. 
Therefore, we have 
$Z_{V^{\natural}}(v,z)=c(v)\Delta (z)$, 
where $c(v)$ is a constant depend on $v$ 
(cf.~\cite[page 299, line 11 up]{Zhu}, 
\cite{DLM}). 
Assume that $\tau (m)=0$. 
Then, for any $v\in (V^{\natural})_{[12]}$, 
we have $\tr\vert_{(V^{\natural})_{m+1}}o(v)= 0$. 
Therefore, 
based on (\ref{eqn:Zhu}), 
$(V^{\natural})_{m+1}$ is a conformal $12$-design.

Now, we assume the contrary, that $\tau (m)\neq 0$. 
Since 
$(V^{\natural})_{2}$ is not a conformal $12$-design 
(cf~\cite[page 2333]{H1},~\cite[Theorem $3$]{DM}), 
there exists $v\in (V^{\natural})_{[12]}$ 
of degree $12$ 
such that 
$Z_{V^{\natural}}(v,z)=c(v)\Delta(z)
=c(v)\sum_{m=1}^{\infty}\tau(m)q^{m}$, 
where $c(v)\neq 0$ (cf.~\cite{Zhu}). 
Hence, we have 
$\tr\vert_{(V^{\natural})_{m+1}}o(v)=c(v)\times \tau(m)\neq 0$, 
which implies that 
$(V^{\natural})_{m+1}$ is not a conformal $12$-design, by (\ref{eqn:Zhu}). 
This completes the proof of Theorem \ref{thm:main1}. 

\section{The case of $M(1)$}
\subsection{Free boson vertex operator algebras}
In this section, we review the definition of 
the $d$-free boson VOA $M(1)$. 
For details of the construction, see \cite{FLM}. 
Let $\h$ be a $d$-dimensional vector space with 
a nondegenerate symmetric bilinear form $(,)$, 
and let $\hh$ be the corresponding affinization, 
viewing $\h$ as an abelian Lie algebra 
$\hh=\h\ot \CC[t, t^{-1}]\op \CC K$ with commutator relations 
\begin{align*}
[h\ot t^{m}, h^{\prime}\ot t^{n}]&=m(h, h^{\prime})\delta_{m+n, 0}K, 
(h, h^{\prime}\in \h, m, n \in\ZZ), \\
[K, h\ot \CC[t, t^{-1}]]&=0. 
\end{align*}
Consider the induced module 
\[
M(1)=\mathcal{U}(\hh)\ot_{\h\ot\CC[t]\op\CC K}\CC, 
\]
where $\h\ot\CC[t]$ acts trivially on $\CC$, and $K$ acts as $1$. 
We denote by $h(n)$ the action of $h\ot t^{n}$ on $M(1)$. 
The space $M(1)$ is linearly isomorphic to 
the symmetric algebra $S(\h \ot t^{-1}\CC[t^{-1}])$. 
Thus, setting $\1 = 1 \ot 1$, any element in $M(1)$ 
is a linear combination of elements of type
\[
v = a_1(-n_1 ) \cdots a_k(-n_k)\1,\ 
(a_1, \ldots , a_k \in \h,\ n_1, \ldots , n_k \in \ZZ_{+}). 
\]

Now let $\{h_i\}_{i=1}^{d}$ be an orthonormal basis of $\h$, 
and set $\omega = 1/2\sum_{i=1}^{d}h_{i}(-1)^{2}\1$. 
Then, $(M(1), Y, \1,\omega)$ is a VOA with a vacuum $\1$ and 
Virasoro element $\w$. 
In particular,
\[
M(1)=\bigoplus_{n\geq 0}M(1)_{n}, 
\]
where $M(1)_{n}=\la a_1(-n_1) \cdots a_k(-n_k)\1\mid 
a_1, \ldots , a_k \in \h,\ n_1, \ldots , n_k \in \ZZ_{+},\ 
\sum n_i=n\ra$. 
We identify $M(1)_{1}$ with $\h$ in the obvious way. 

\subsection{Proof of Theorem \ref{thm:main2}}
First, note that the automorphism group of 
a $d$-free boson VOA is the orthogonal group $O(d,\CC)$ \cite{DM2}. 

\begin{prop}\label{prop:ex}
For $k>0$, $(M(1))_{k}$ is a conformal $3$-design. 
\end{prop}
\begin{proof}
Let $G=O(d,\RR)< O(d,\CC)$ be a subgroup of the automorphism group of $M(1)$. 
Let $\theta$ be an element in $G$ of order $2$ 
that is a lift of $-1\in \Aut(\h)$, namely, 
for $a_1(-n_1)\cdots a_k(-n_k)\1\in M(1)$,
\[
\theta: a_1(-n_1)\cdots a_k(-n_k)\1
\mapsto(-1)^{k}a_1(-n_1)\cdots a_k(-n_k)\1. 
\] 
Then, 
\begin{align*}
\left\{
\begin{array}{l}
(M(1)^{\la\theta\ra})_1=\emptyset \\ 
(M(1)^{\la\theta\ra})_2=S^2(\frak{h}(-1))\\
(M(1)^{\la\theta\ra})_3=\h(-2)\ot \h(-1). 
\end{array}
\right.
\end{align*}
Therefore, based on~{\cite[Theorem 2.5]{H1}}, it is sufficient to show that 
$(M(1)^{G})_2=\CC\w=\CC(\sum_{i=1}^{d}h_i(-1)\ot h_i(-1))$ and 
$(M(1)^{G})_3=\CC L(-1)\w=\CC(\sum_{i=1}^{d}h_i(-2)\ot h_i(-1))$. 
We consider the $G$-action on $\CC[x_1,\ldots, x_d]$. 
Then, the invariants $\CC[x_1,\ldots, x_d]^G$ are the space $\CC[x_1^2+\cdots+x_d^2]$, \cite{Er}. 
Substituting $\{x_i\}_{i=1}^d$ for $\{h_i\}_{i=1}^d$, 
we obtain the desired results. 
Hence, 
$(M(1))_{k}$ is a conformal $3$-design. 
\end{proof}
\begin{prop}\label{prop:nonex}
For $k>0$, $(M(1))_{k}$ is not a conformal $4$-design. 
\end{prop}

\begin{proof}
Let $\{h_{i}\}_{i=1}^{d}$ be the orthonormal basis of $\h$, and 
let $$v_{4}=h_1(-1)^4\1-2h_1(-3)h_1(-1)\1+\frac{3}{2}h_1(-2)^2\1.$$ 
Then, $v_{4}\in (M(1))_4$ is the highest weight vector 
because $L(1)v_{4}=L(2)v_{4}=0$ (cf.~\cite[page~423]{DMN}). 
Then, it is sufficient to show that 
\[
\tr\vert_{(M(1))_{k}}o(v_{4})\neq 0. 
\]
We set $a(m)$ as follows: 
\[
Z_{M(1)}(v_{4},z)=q^{-d/24}\sum_{m\geq 0}
({\rm tr}\vert _{(M(1))_{m}}o(v_{4}))q^{m}
=q^{-d/24}\sum_{m\geq 0}a(m)q^{m}. 
\]
We show that $a(m)\neq 0$. 
We will divide the problem into two cases: $d=1$ and $d\geq 2$. 
Let $d=1$. 
For $1\leq k\leq 3$, by calculation, we have
\[
\tr\vert_{(M(1))_{k}}o(v_4)=
\left\{
\begin{array}{ll}
-6& {\rm if\ } k=1\\
-42&{\rm if\ } k=2\\
-120&{\rm if\ } k=3, 
\end{array}
\right. 
\]
that is, 
\begin{align}
Z_{M(1)}(v_{4},z)=q^{-1/24}(-6q-42q^2-120q^3+\cdots).\label{eqn:1}
\end{align}
On the other hand, based on {\cite[Theorem 1]{DMN}}, 
\begin{align}
Z_{M(1)}(v_{4},z)=\frac{f_1(v_{4},z)}{\eta(z)},\label{eqn:2}
\end{align}
where $f_1(v_{4},z)\in \CC[E_{2}, E_{4}, E_{6}]$, and 
since $v_{4}\in (M(1))_{4}$ and based on {\cite[Theorem 1]{DMN}}, 
$f_1(v_{4},z)$ can be written in terms of $E_{2}$, $E_{2}^2$, and $E_{4}$. 
Therefore, we can determine $f_1(v_{4},z)$ 
by (\ref{eqn:1}) and (\ref{eqn:2}) as follows: 
\begin{align*}
f_1(v_{4},z)=\frac{E_{2}(z)^2-E_{4}(z)}{48}. 
\end{align*}
Here, using the equation (see \cite{Hahn}) 
\begin{align*}
\frac{E_{4}(z)-E_{2}(z)^{2}}{288}=\sum_{m>0}m\sigma_{1}(m)q^{m}, \\
\end{align*}
we obtain 
\begin{align*}
f_1(v_{4},z)=\frac{E_{2}(z)^2-E_{4}(z)}{48}=(-6)\sum_{m>0}m\sigma_{1}(m)q^{m}. 
\end{align*}
Therefore, the coefficients of $f_1(v_4,z)$ are negative integers. 
Since the coefficients of $1/\eta(z)$ are positive integers, 
$a(m)\neq 0$ for all $m>0$ for the case $d=1$. 

Let $d\geq 2$ and 
\begin{align}
Z_{M(1)}(v_{4},z)=\frac{f_d(v_{4},z)}{\eta(z)^d}, \label{eqn:3}
\end{align}
where $f_d(v_{4},z)\in \CC[E_{2}, E_{4}, E_{6}]$ and 
$f_d(v_{4},z)$ is also written in terms of $E_{2}$, $E_{2}^2$, and $E_{4}$. 
Using~{\cite[Corollary 2.2.2]{DMN}}, we have that 
$f_1(v_{4},z)$ and $f_d(v_{4},z)$ 
are same function because $v_{4}$ is written in terms of 
$h_1(-1)$, $h_1(-2)$, and $h_1(-3)$, and 
the basis $\{h_i\}_{i=2}^{d}$ does not influence 
{\cite[Corollary 2.2.2]{DMN}}, namely, 
\begin{align*}
f_d(v_{4},z)=\frac{E_{2}(z)^2-E_{4}(z)}{48}. 
\end{align*}
Using the same argument for the case $d=1$, 
we have $a(m)\neq 0$. 
\end{proof}
Propositions \ref{prop:ex} and \ref{prop:nonex} are summarized as Theorem \ref{thm:main2}. 

\bigskip
\noindent
{\bf Acknowledgment.}
The author would like to thank Gerald H\"ohn, Hiroki Shimakura, and Kenichiro Tanabe for their helpful discussions. 

\end{document}